\providecommand{\U}[1]{\protect\rule{.1in}{.1in}}
\newtheorem{theorem}{Theorem}
\newtheorem{corollary}[theorem]{Corollary}
\newtheorem{remark}[theorem]{Remark}
\newenvironment{proof}[1][Proof]{\noindent\textbf{#1.} }{\ \rule{0.5em}{0.5em}}
\begin{document}

\title{Characterization of Lipschitz continuous DC functions\thanks{The research of
the first author has been supported by the CONICYT of Chile (Fondecyt No
1110019 and ECOS-Conicyt No C10E08) and by the MICINN of Spain (grant
MTM2008-06695-C03-02). The research of the second author has been supported by
MICINN of Spain, grant MTM2008-06695-C03-03, by Generalitat de Catalunya and
by the Barcelona GSE Research Network. He is affiliated to MOVE (Markets,
Organizations and Votes in Economics).}}
\author{A.\ Hantoute$^{1}$\thanks{ahantoute@dim.uchile.cl}, J.E.
Mart\'{\i}nez-Legaz$^{2}$\thanks{juanenrique.martinez.legaz@uab.cat}\\$^{1}${\small Universidad de Chile, Centro de Modelamiento Matem\'{a}tico
(CMM)}\\{\small Avda Blanco Encalada 2120, Piso 7, Santiago, Chile}\\{\small \ }$^{2}${\small Universitat Aut\`{o}noma de Barcelona}\\{\small Departament d'Economia i d'Hist\`{o}ria Econ\`{o}mica, 08193
Bellaterra, Spain}}
\date{}
\maketitle

\begin{abstract}
We give a necessary and sufficient condition for a difference of convex (DC,
for short) functions, defined on a locally convex space, to be Lipschitz
continuous. Our criterion relies on the intersections of the $\varepsilon
$-subdifferentials of the involved functions.

\textbf{Key words. }DC functions, Lipschitz continuity, Integration formulas,
$\varepsilon$-subdifferential

\emph{Mathematics Subject Classification (2010)}: 26B05,\emph{\ }26J25, 49H05.

\end{abstract}

\section{Introduction}

In this paper, we work with a (Hausdorff) real locally convex topological
vector space $X$ whose dual is denoted by $X^{\ast}.\ $The duality product is
denoted by $\langle\cdot,\cdot\rangle:X\times X^{\ast}\longrightarrow
\mathbb{R},$ and the zero vector (in $X$ and $X^{\ast}$) by $\theta.$

Classical integration formulas (\cite{Moreaubook, Rock70}) have been first
established in the Banach spaces setting for proper lower semicontinuous (lsc,
for short) convex functions using the Fenchel subdifferential, which is
defined for a given function $f:X\rightarrow\mathbb{R}\cup\{+\infty\}$ and a
point $x$ in the domain of $f,\ \operatorname*{dom}f:=\{x\in X\mid
f(x)<+\infty\},$ by
\[
\partial f(x):=\{x^{\ast}\in X^{\ast}:f(y)-f(x)\geq\langle y-x,x^{\ast}%
\rangle\text{ \ \ for all }y\in X\}.
\]
These results have been extended outside the Banach space (\cite{Bachir2002,
ThibMarcelin2005}) and the non-convex settings (\cite{CorYbHan}) by using the
$\varepsilon$-subdifferential mapping, defined for $\varepsilon>0$ by%
\[
\partial_{\varepsilon}f(x):=\{x^{\ast}\in X^{\ast}\mid f(y)-f(x)\geq\langle
y-x,x^{\ast}\rangle-\varepsilon\text{ for all }y\in X\}.
\]

In this paper we exploit an idea, recently used in \cite{K10}, to establish
several characterizations for the Lipschitz character of the difference of
convex (DC, for short) functions. As a consequence, if the Lipschitz constant
is equal to $0$ then we obtain an integration formula guaranteeing the
coincidence of the involved functions up to an additive constant. The main
result is presented in Theorem \ref{dc Lipschitz} in a slightly more general
form, valid in the locally convex spaces setting, which characterizes the
domination of the variations of DC functions by means of a convex continuous
functions. The desired integration formula is obtained in Theorem
\ref{dc Lipschitz copy}.

\section{The main result}

The desired results providing the characterization of Lipschitz DC functions
will be given in Theorem \ref{dc Lipschitz copy}, which is a consequence of
the following theorem.

In what follows, $f,g:X\longrightarrow\mathbb{R\cup}\left\{  +\infty\right\}
$ are two given functions with a common domain%
\[
D:=f^{-1}\left(  \mathbb{R}\right)  =g^{-1}\left(  \mathbb{R}\right)  ,
\]
assumed nonempty and convex.

\begin{theorem}
\label{dc Lipschitz}Let $h:X\longrightarrow\mathbb{R}$ be a continuous convex
function such that $h(\theta)=0.$ Then, the following statements are
equivalent\emph{:}

\emph{(i)} $f$ and $g$ are convex, lsc on $D,$ and satisfy
\[
f(x)-g(x)\leq f(y)-g(y)+h(x-y)\text{ \ for all }x,y\in D.
\]

\emph{(ii) }For each $x\in D$%
\[
\emptyset\neq\partial_{\varepsilon}f(x)\subset\partial_{\varepsilon
}g(x)+\partial_{\varepsilon}h(\theta)\text{ \ \ for all }\varepsilon>0.
\]

\emph{(iii) }For each $x\in D$ there exists $\delta>0$ such that%
\[
\emptyset\neq\partial_{\varepsilon}f(x)\subset\partial_{\varepsilon
}g(x)+\partial_{\varepsilon}h(\theta)\text{ \ \ for all }\varepsilon
\in(0,\delta).
\]

\emph{(iv) }For each $x\in D$%
\[
\partial_{\varepsilon}f\left(  x\right)  \cap\left(  \partial_{\varepsilon
}g\left(  x\right)  +\partial_{\varepsilon}h\left(  \theta\right)  \right)
\neq\emptyset\text{ \ \ for all }\varepsilon>0.
\]

\emph{(v) }For each $x\in D$ there exists $\delta>0$ such that%
\[
\partial_{\varepsilon}f\left(  x\right)  \cap\left(  \partial_{\varepsilon
}g\left(  x\right)  +\partial_{\varepsilon}h\left(  \theta\right)  \right)
\neq\emptyset\text{ \ \ for all }\varepsilon\in(0,\delta).
\]

\end{theorem}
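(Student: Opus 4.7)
The plan is to establish the circle (i) $\Rightarrow$ (ii) $\Rightarrow$ (iii) $\Rightarrow$ (v) $\Rightarrow$ (i) and to obtain (iv) through the side path (ii) $\Rightarrow$ (iv) $\Rightarrow$ (v). The implications (ii) $\Rightarrow$ (iii), (ii) $\Rightarrow$ (iv) and (iv) $\Rightarrow$ (v) are immediate from the definitions; for (iii) $\Rightarrow$ (v) one observes that any $x^{\ast}\in\partial_{\varepsilon}f(x)\subset\partial_{\varepsilon}g(x)+\partial_{\varepsilon}h(\theta)$ witnesses the nonempty intersection required by (v).

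For (i) $\Rightarrow$ (ii), the convexity and lower semicontinuity of $f$ on the convex set $D$ imply that $f$ coincides on $D$ with its convex lsc envelope $f^{\ast\ast}$, so $\partial_{\varepsilon}f(x)\neq\emptyset$ at every $x\in D$ for every $\varepsilon>0$. Given such $x^{\ast}\in\partial_{\varepsilon}f(x)$, swapping $x$ and $y$ in the inequality of (i) yields $g(y)-g(x)\geq f(y)-f(x)-h(y-x)$; combined with the subgradient inequality for $f$ this gives
\[
\bigl[g(y)+h(y-x)\bigr]-\bigl[g(x)+h(\theta)\bigr]\geq\langle y-x,x^{\ast}\rangle-\varepsilon,
\]
that is, $x^{\ast}\in\partial_{\varepsilon}\bigl(g+h(\cdot-x)\bigr)(x)$. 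Since $h(\cdot-x)$ is continuous on $X$, the exact $\varepsilon$-subdifferential sum rule of Hiriart-Urruty--Phelps type decomposes $x^{\ast}=u^{\ast}+v^{\ast}$ with $u^{\ast}\in\partial_{\varepsilon_{1}}g(x)$ and $v^{\ast}\in\partial_{\varepsilon_{2}}h(\theta)$ for some $\varepsilon_{1},\varepsilon_{2}\geq0$ satisfying $\varepsilon_{1}+\varepsilon_{2}=\varepsilon$; the monotonicity of $\partial_{\varepsilon}$ in $\varepsilon$ then delivers (ii).

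For (v) $\Rightarrow$ (i), fix $x\in D$ and for each $\varepsilon\in(0,\delta_{x})$ select $x_{\varepsilon}^{\ast}=u_{\varepsilon}^{\ast}+v_{\varepsilon}^{\ast}$ in the intersection, with $u_{\varepsilon}^{\ast}\in\partial_{\varepsilon}g(x)$ and $v_{\varepsilon}^{\ast}\in\partial_{\varepsilon}h(\theta)$. For arbitrary $y\in D$, combining the three $\varepsilon$-subgradient inequalities (for $f$ at $x$, for $g$ at $x$, and for $h$ at $\theta$ evaluated at $z=x-y$) produces
\[
f(x)-f(y)\leq\langle x-y,x_{\varepsilon}^{\ast}\rangle+\varepsilon\leq\bigl(g(x)-g(y)+\varepsilon\bigr)+\bigl(h(x-y)+\varepsilon\bigr)+\varepsilon,
\]
and letting $\varepsilon\to0^{+}$ delivers the inequality of (i). The convexity and lower semicontinuity of $f$ (and symmetrically of $g$) on $D$ follow because the nonemptiness of $\partial_{\varepsilon}f(x)$ for arbitrarily small $\varepsilon>0$ at each $x\in D$ forces $f=f^{\ast\ast}$ on $D$, and the restriction of the globally convex lsc function $f^{\ast\ast}$ to the convex set $D$ is automatically convex and lsc.

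The main technical obstacle is the application, in the step (i) $\Rightarrow$ (ii), of the exact $\varepsilon$-subdifferential sum rule in the locally convex setting; its validity hinges precisely on the global continuity hypothesis on $h$ and is what allows the functional inequality in (i) to be translated into the subgradient decomposition required by (ii). All other implications reduce to elementary algebraic manipulations of $\varepsilon$-subgradient inequalities together with the passage to the limit as $\varepsilon\to0^{+}$.
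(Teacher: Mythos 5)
Your implication (i) $\Rightarrow$ (ii) follows essentially the paper's route (pass from the functional inequality to $x^{\ast}\in\partial_{\varepsilon}\left(  g+h(\cdot-x)\right)  (x)$ and apply the exact $\varepsilon$-subdifferential sum rule), modulo the care the paper takes in replacing $g$ by the lsc envelope of $g+f(x)-g(x)$ so that the sum rule is applied to a globally lsc proper convex function. The genuine gap is in (v) $\Rightarrow$ (i), which is the hard direction, and your one-step argument contains a sign error that cannot be repaired locally. From $u_{\varepsilon}^{\ast}\in\partial_{\varepsilon}g(x)$ the $\varepsilon$-subgradient inequality evaluated at $y$ reads $g(y)-g(x)\geq\langle y-x,u_{\varepsilon}^{\ast}\rangle-\varepsilon$, that is,
\[
\langle x-y,u_{\varepsilon}^{\ast}\rangle\geq g(x)-g(y)-\varepsilon,
\]
which is a \emph{lower} bound on $\langle x-y,u_{\varepsilon}^{\ast}\rangle$; your chain of inequalities uses the \emph{upper} bound $\langle x-y,u_{\varepsilon}^{\ast}\rangle\leq g(x)-g(y)+\varepsilon$, which would require $u_{\varepsilon}^{\ast}\in\partial_{\varepsilon}g(y)$, not $\partial_{\varepsilon}g(x)$. (Concretely, with $X=\mathbb{R}$, $g(t)=t^{2}/2$, $x=0$, $y=1$ and $u_{\varepsilon}^{\ast}=0\in\partial_{\varepsilon}g(0)$, your bound asserts $0\leq-1/2+\varepsilon$.) Since hypothesis (v) only decomposes an element of $\partial_{\varepsilon}f(x)$ inside $\partial_{\varepsilon}g(x)+\partial_{\varepsilon}h(\theta)$ at the \emph{same} point $x$, you have subgradient information for $f$ and for $g$ at $x$, whereas you need the $g$-information at the other endpoint; no rearrangement of these three inequalities closes the gap.

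This obstruction is exactly why the paper uses a discretization (Kocourek-type) argument: subdivide $[x,y]$ into the points $x_{m,i}=x+\frac{i}{m}(y-x)$, at each interior point pick $u_{m,i}^{\ast}=v_{m,i}^{\ast}+w_{m,i}^{\ast}$ in the intersection with tolerance $m^{-1}\gamma_{m}\varepsilon$, use the $f$-subgradient at $x_{m,i}$ to step \emph{forward} to $x_{m,i+1}$ and the $g$-subgradient at $x_{m,i}$ to step \emph{backward} to $x_{m,i-1}$, and add the two telescoping sums; the forward and backward linear terms then combine through $u_{m,i}^{\ast}-v_{m,i}^{\ast}=w_{m,i}^{\ast}\in\partial_{m^{-1}\gamma_{m}\varepsilon}h(\theta)$, whose contribution is controlled by $\frac{m-1}{m}h(x-y)$ plus an error of order $m^{-1}\varepsilon$, and letting $m\rightarrow\infty$ and then $\varepsilon\rightarrow0^{+}$ yields the inequality in (i). Your observation that the nonemptiness of $\partial_{\varepsilon}f(x)$ and $\partial_{\varepsilon}g(x)$ for arbitrarily small $\varepsilon>0$ forces $f=f^{\ast\ast}$ and $g=g^{\ast\ast}$ on $D$, and hence the convexity and lower semicontinuity claims in (i), is correct and is in fact the part the paper leaves implicit; but as it stands the central inequality of (i) is not proved.
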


\begin{proof}
(i) $\Longrightarrow$ (ii). Since $f$ is proper ($\operatorname*{dom}%
f\neq\emptyset$), convex and lsc on $D$, for any given $\varepsilon>0$ the
$\varepsilon$-subdifferential operator $\partial_{\varepsilon}f$ is nonempty
on $D$ (\cite[Prop. 2.4.4(iii)]{ZalinescuBook}). For $x\in D$, we define the
function $\widetilde{g}:X\longrightarrow\mathbb{R\cup}\left\{  +\infty
\right\}  $ as%
\[
\widetilde{g}:=g+f\left(  x\right)  -g\left(  x\right)
\]
so that by (i) the inequality $f\leq\widetilde{g}+h(\cdot-x)$ holds, as well
as $f(x)=\widetilde{g}(x)+h(\theta)=\widetilde{g}(x).$ Notice that
$\operatorname*{cl}\widetilde{g}=\operatorname*{cl}g+f(x)-g(x),$ where
$\operatorname*{cl}$ refers to the corresponding lsc envelope\emph{. }Hence,
as $g$ is lsc on $D,$ $\operatorname*{cl}\widetilde{g}$ coincides with
$g+f\left(  x\right)  -g\left(  x\right)  $ on $D,$ which implies that it is
proper. Therefore, since (\cite[Lemma 15]{HanLopZal08})
\[
\operatorname*{cl}(\widetilde{g}+h(\cdot-x))=\operatorname*{cl}\widetilde
{g}+h(\cdot-x)=\operatorname*{cl}g+h(\cdot-x)+f\left(  x\right)  -g\left(
x\right)
\]
and $\partial_{\delta}(\operatorname*{cl}\widetilde{g})(x)=\partial_{\delta
}\widetilde{g}(x)=\partial_{\delta}g(x)$ (for all $\delta>0$), by appealing to
the sum rule of the $\varepsilon$-subdifferential (e.g., \cite[Theorem
2.8.3]{ZalinescuBook}) we get\
\begin{align*}
\partial_{\varepsilon}f\left(  x\right)   &  \subset%
{\displaystyle\bigcup\limits_{\substack{\varepsilon_{1},\varepsilon_{2}%
\geq0\\\varepsilon_{1}+\varepsilon_{2}=\varepsilon}}}
\left(  \partial_{\varepsilon_{1}}(\operatorname*{cl}\widetilde{g})\left(
x\right)  +\partial_{\varepsilon_{2}}h\left(  \theta\right)  \right)  \\
&  =%
{\displaystyle\bigcup\limits_{\substack{\varepsilon_{1},\varepsilon_{2}%
\geq0\\\varepsilon_{1}+\varepsilon_{2}=\varepsilon}}}
\left(  \partial_{\varepsilon_{1}}g\left(  x\right)  +\partial_{\varepsilon
_{2}}h\left(  \theta\right)  \right)  \subset\partial_{\varepsilon}g\left(
x\right)  +\partial_{\varepsilon}h\left(  \theta\right)  ;
\end{align*}
showing that (ii) holds.

The implication (ii) $\Longrightarrow$ (iii) $\Longrightarrow$ (v) and (ii)
$\Longrightarrow$ (iv) $\Longrightarrow$ (v) are obvious.

(v) $\Longrightarrow$ (i). We fix\ $x,y\in D$ and take an arbitrary number
$\varepsilon>0.$ For $m=1,2,\cdots\ $\ we denote
\[
x_{m,i}:=x+\frac{i}{m}\left(  y-x\right)  \text{ \ for }i=0,1,\cdots,m.
\]
Then, by the current assumption (v) for each $i$ and $m$ there exists
$\gamma_{m,i}\in(0,m^{-1})$\ such that
\[
\partial_{m^{-1}\gamma\varepsilon}f\left(  x_{m,i}\right)  \cap\left[
\partial_{m^{-1}\gamma\varepsilon}g\left(  x_{m,i}\right)  +\partial
_{m^{-1}\gamma\varepsilon}h(\theta)\right]  \neq\emptyset\text{ \ for all
}\gamma\in(0,\gamma_{m,i}).
\]
Set%
\[
\gamma_{m}:=\min_{i\in\{1,\cdots,m\}}\gamma_{m,i},
\]
so that $\gamma_{m}>0,$ and choose $u_{m,i}^{\ast}\in\partial_{m^{-1}%
\gamma_{_{m}}\varepsilon}f\left(  x_{m,i}\right)  $, $v_{m,i}^{\ast}%
\in\partial_{m^{-1}\gamma_{_{m}}\varepsilon}g\left(  x_{m,i}\right)  $ and
$w_{m,i}^{\ast}\in\partial_{m^{-1}\gamma\varepsilon}h(\theta)$ such that
$u_{m,i}^{\ast}=v_{m,i}^{\ast}+w_{m,i}^{\ast}$ for $i=1,...,m-1$. In this way,
if $u^{\ast}\in\partial_{\varepsilon}f\left(  x\right)  $ and $v^{\ast}%
\in\partial_{\varepsilon}g\left(  y\right)  $ are given we write
\begin{align*}
f\left(  x_{m,1}\right)  -f\left(  x\right)   &  \geq\frac{1}{m}\left\langle
y-x,u^{\ast}\right\rangle -\varepsilon\\
f\left(  x_{m,i+1}\right)  -f\left(  x_{m,i}\right)   &  \geq\frac{1}%
{m}\left\langle y-x,u_{m,i}^{\ast}\right\rangle -m^{-1}\gamma_{_{m}%
}\varepsilon\text{\qquad}(i=1,...,m-1)\\
g\left(  x_{m,i-1}\right)  -g\left(  x_{m,i}\right)   &  \geq-\frac{1}%
{m}\left\langle y-x,v_{m,i}^{\ast}\right\rangle -m^{-1}\gamma_{_{m}%
}\varepsilon\text{\qquad}(i=1,...,m-1)\\
g\left(  x_{m,m-1}\right)  -g\left(  y\right)   &  \geq-\frac{1}%
{m}\left\langle y-x,v^{\ast}\right\rangle -\varepsilon.
\end{align*}
Adding up these inequalities and using the facts that $x_{m,m}=y$ and
$x_{m,0}=x,$ together with $u_{m,i}^{\ast}=v_{m,i}^{\ast}+w_{m,i}^{\ast}$, we
obtain that
\begin{align*}
f\left(  y\right)  -f\left(  x\right)  +g\left(  x\right)  -g\left(  y\right)
&  \geq\frac{1}{m}\left\langle y-x,u^{\ast}-v^{\ast}\right\rangle +\frac{1}{m}%
{\displaystyle\sum\limits_{i=1}^{m-1}}
\left\langle y-x,w_{m,i}^{\ast}\right\rangle \\
&  \quad\quad\quad-2\left(  m-1\right)  m^{-1}\gamma_{m}\varepsilon
-2\varepsilon.
\end{align*}
Thus, since $w_{m,i}^{\ast}\in\partial_{m^{-1}\gamma\varepsilon}h(\theta)$ we
deduce that
\begin{align*}
f\left(  y\right)  -f\left(  x\right)  +g\left(  x\right)  -g\left(  y\right)
&  \geq\frac{1}{m}\left\langle y-x,u^{\ast}-v^{\ast}\right\rangle -\frac
{m-1}{m}h(x-y)\\
&  \quad\quad\quad-2\left(  m-1\right)  m^{-1}\gamma_{m}\varepsilon
-2\varepsilon
\end{align*}
which gives us, as $m$ goes to $\infty$ (recall that $0<\gamma_{m}\leq m^{-1}%
$),%
\[
f\left(  y\right)  -f\left(  x\right)  +g\left(  x\right)  -g\left(  y\right)
\geq-h(x-y)-2\varepsilon.
\]
Hence, by letting $\varepsilon$ go to $0$ we get
\[
f\left(  x\right)  -g\left(  x\right)  \leq f\left(  y\right)  -g\left(
y\right)  +h(x-y);
\]
that is, (i) follows.\qquad
\end{proof}

\bigskip

The particular case $h:=0$ in Theorem \ref{dc Lipschitz} yields a new
integration result, which relies on the intersection of the $\varepsilon
$-subdifferentials of the nominal functions. We will denote by $f_{D}$ and
$g_{D}$ the restrictions of $f$ and $g$ to $D,$ respectively.

\begin{corollary}
\label{dc constant}\emph{(cf. \cite[Corollary 2.5]{BMR10})} The following
statements are equivalent\emph{:}

\emph{(i)} $f$ and $g$ are convex, lsc on $D,$ and $f_{D}-g_{D}$ is constant.

\emph{(ii) }For each $x\in D$%
\[
\emptyset\neq\partial_{\varepsilon}f(x)\subset\partial_{\varepsilon}g(x)\text{
\ \ for all }\varepsilon>0.
\]

\emph{(iii) }For each $x\in D$ there exists $\delta>0$ such that
\[
\emptyset\neq\partial_{\varepsilon}f(x)\subset\partial_{\varepsilon}g(x)\text{
\ \ for all }\varepsilon\in(0,\delta).
\]

\emph{(iv) }For each $x\in D$%
\[
\partial_{\varepsilon}f\left(  x\right)  \cap\partial_{\varepsilon}g\left(
x\right)  \neq\emptyset\text{ \ \ for all }\varepsilon>0.
\]

\emph{(v) }For each $x\in D$ there exists $\delta>0$ such that%
\[
\partial_{\varepsilon}f\left(  x\right)  \cap\partial_{\varepsilon}g\left(
x\right)  \neq\emptyset\text{ \ \ for all }\varepsilon\in(0,\delta).
\]

\end{corollary}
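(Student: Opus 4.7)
The plan is to deduce Corollary \ref{dc constant} as the immediate specialization of Theorem \ref{dc Lipschitz} to $h \equiv 0$. First I would verify that $h \equiv 0$ satisfies the hypotheses of the theorem: it is certainly continuous and convex, and $h(\theta) = 0$.

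The key computational step is to identify $\partial_{\varepsilon} h(\theta)$ when $h \equiv 0$. By definition, $x^{\ast} \in \partial_{\varepsilon} 0(\theta)$ iff $\langle y, x^{\ast}\rangle \leq \varepsilon$ for all $y \in X$; rescaling $y$ by arbitrary positive reals forces $\langle y, x^{\ast}\rangle \leq 0$ for every $y$, and then replacing $y$ by $-y$ gives $x^{\ast} = \theta$. Hence $\partial_{\varepsilon} h(\theta) = \{\theta\}$ for every $\varepsilon > 0$, so that $\partial_{\varepsilon} g(x) + \partial_{\varepsilon} h(\theta) = \partial_{\varepsilon} g(x)$. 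This shows that conditions (ii)-(v) of the corollary are literally conditions (ii)-(v) of Theorem \ref{dc Lipschitz} when $h \equiv 0$.

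It remains to match up condition (i). With $h \equiv 0$, condition (i) of the theorem reads: $f$ and $g$ are convex and lsc on $D$, and $f(x) - g(x) \leq f(y) - g(y)$ for all $x, y \in D$. Swapping the roles of $x$ and $y$ yields the reverse inequality, so this is equivalent to $f(x) - g(x) = f(y) - g(y)$ for all $x, y \in D$, i.e., to $f_{D} - g_{D}$ being constant; the converse direction is trivial. Thus condition (i) of the corollary matches condition (i) of the theorem with $h \equiv 0$, and the five-way equivalence transfers directly.

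There is no substantive obstacle here: the whole argument is a direct specialization, and the only item requiring a brief calculation is the identification $\partial_{\varepsilon} 0(\theta) = \{\theta\}$.
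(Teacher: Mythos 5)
Your proposal is correct and is exactly the paper's argument: the corollary is obtained as the special case $h:=0$ of Theorem \ref{dc Lipschitz}, and the only details to check are $\partial_{\varepsilon}0(\theta)=\{\theta\}$ and the symmetry argument showing that the inequality in (i) with $h=0$ forces $f_{D}-g_{D}$ to be constant, both of which you carry out correctly.
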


The following corollary, giving a criterion for integrating the Fenchel
subdifferential, is an immediate consequence of Corollary \ref{dc constant} in
view of the straightforward relationships $\partial f\left(  x\right)
\subset\partial_{\varepsilon}f\left(  x\right)  $ and $\partial g\left(
x\right)  \subset\partial_{\varepsilon}g\left(  x\right)  $ for every $x\in D$
and every $\varepsilon>0.$

\begin{corollary}
\label{subdiff}\emph{(cf. \cite[Theorem 1]{K10})} The following statements are
equivalent\emph{:}

\emph{(i)} For each $x\in D$%
\[
\emptyset\neq\partial f\left(  x\right)  \subset\partial g\left(  x\right)  .
\]
\ \ 

\emph{(ii)} For each $x\in D$%
\[
\partial f\left(  x\right)  \cap\partial g\left(  x\right)  \neq\emptyset.
\]
\ \ 

\emph{(iii)} For each $x\in D$%
\[
\emptyset\neq\partial f\left(  x\right)  =\partial g\left(  x\right)  .
\]
If these statements hold, then $f$ and $g$ are convex, lsc on $D,$ and
$f_{D}-g_{D}$ is constant.
\end{corollary}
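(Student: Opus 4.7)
The plan is to prove the equivalences by the cycle (iii) $\Longrightarrow$ (i) $\Longrightarrow$ (ii) $\Longrightarrow$ (iii), and then derive the concluding statement. The first two implications are immediate: (iii) says $\partial f(x) = \partial g(x)$ and this common set is nonempty, which trivially gives (i); and (i) says $\partial f(x)$ is a nonempty subset of $\partial g(x)$, which yields $\partial f(x) \cap \partial g(x) \neq \emptyset$, i.e.\ (ii).

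The only implication with real content is (ii) $\Longrightarrow$ (iii), and this is where I would use the observation pointed out just before the statement: for every $\varepsilon>0$, the inclusions $\partial f(x)\subset \partial_{\varepsilon}f(x)$ and $\partial g(x)\subset \partial_{\varepsilon}g(x)$ hold directly from the definitions. Therefore, assuming (ii), for every $x\in D$ and every $\varepsilon>0$ one has
\[
\emptyset \neq \partial f(x)\cap \partial g(x)\subset \partial_{\varepsilon}f(x)\cap \partial_{\varepsilon}g(x),
\]
which is precisely statement (iv) of Corollary~\ref{dc constant}. Invoking that corollary, we conclude that $f$ and $g$ are convex and lsc on $D$, and that $f_{D}-g_{D}$ equals some constant $c$.

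From this, getting (iii) is just bookkeeping: since $f$ and $g$ have common domain $D$ and $f=g+c$ on $D$, while both functions take the value $+\infty$ off $D$, we in fact have $f(y)=g(y)+c$ for every $y\in X$ (with the convention $(+\infty)+c=+\infty$). Hence for every $x\in D$ and every $x^{\ast}\in X^{\ast}$, the inequalities defining $\partial f(x)$ and $\partial g(x)$ are the same, giving $\partial f(x)=\partial g(x)$. Non-emptiness of this common set is furnished by (ii). This establishes (iii), and the final conclusion of the corollary is nothing but the content of Corollary~\ref{dc constant}(i) recovered en route.

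There is no serious obstacle here: all the analytic work has been absorbed into Corollary~\ref{dc constant}. The only point requiring a brief argument is the passage from $f_{D}-g_{D}$ being constant to $\partial f=\partial g$ on all of $X$, and as noted above this is immediate because the Fenchel subdifferential only involves values of $f$ and $g$, which agree up to a constant everywhere once one takes the common domain into account.
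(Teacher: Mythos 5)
Your proof is correct and follows essentially the same route the paper intends: it reduces everything to Corollary \ref{dc constant}(iv) via the inclusions $\partial f(x)\subset\partial_{\varepsilon}f(x)$ and $\partial g(x)\subset\partial_{\varepsilon}g(x)$, and then recovers $\partial f=\partial g$ from the constancy of $f_{D}-g_{D}$ (the paper leaves these details implicit, calling the corollary an immediate consequence). Your explicit handling of the passage from ``$f_{D}-g_{D}$ constant'' to ``$\partial f(x)=\partial g(x)$'', using that both functions equal $+\infty$ off the common domain $D$, is a correct and welcome filling-in of that gap.
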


\begin{remark}
\label{rem dc constant}\emph{a) The preceding results remain true if }$X$
\emph{is an arbitrary locally convex real topological vector space, not
necessarily Hausdorff. Indeed, the equivalence between the convex and lsc
character of a function and the nonemptiness of its }$\varepsilon
$\emph{-subdifferentials is a reformulation of the Fenchel-Moreau Theorem, the
validity of which in non-Hausdorff spaces has been proved by S. Simons
\cite[Theorem 10.1]{Si11}.}

\emph{b) The equivalence between (i) and (ii) in Corollary \ref{dc constant}
also follows from a well-known characterization of global minima of DC
functions due to J.-B. Hiriart-Urruty \cite[Theorem 4.4]{H89}. Indeed,
according to this characterization, if }$f$\emph{\ and }$g$\emph{\ are convex
then one has }$\partial_{\varepsilon}f\left(  x\right)  \subset\partial
_{\varepsilon}g\left(  x\right)  $\emph{\ for all }$\varepsilon>0$\emph{\ if
and only if }$x$\emph{\ is a global minimum of }$f_{D}-g_{D}.$\emph{\ Hence,
that condition holds for every }$x\in D$\emph{\ if and only if every }$x\in
D$\emph{\ is a global minimum of }$f_{D}-g_{D},$\emph{\ which is obviously
equivalent to }$f_{D}-g_{D}$\emph{\ being constant on }$D$\emph{.}
\end{remark}

From now on we suppose that $X$ is a normed space with a norm denoted by
$\left\Vert \cdot\right\Vert $ whose the dual norm is $\left\Vert
\cdot\right\Vert _{\ast}.$ We use $B_{\ast}\left(  \theta,K\right)  $ to
denote the closed ball in $(X^{\ast},\left\Vert \cdot\right\Vert _{\ast})$
with center $\theta$ and radius $K\geq0,$ and for $A,B\subset X^{\ast}$ we
set
\[
d\left(  A,B\right)  :=\inf\left\{  \left\Vert a-b\right\Vert _{\ast}:a\in
A,\text{ }b\in B\right\}  ,
\]
with the convention that $d\left(  A,B\right)  :=+\infty$ if $A$ or $B$ is empty.

\bigskip

At this moment, we easily get the main result of the paper by taking
$h:=K\left\Vert \cdot\right\Vert $ in Theorem \ref{dc Lipschitz}:

\begin{theorem}
\label{dc Lipschitz copy} Let $K\geq0.$ Then, the following statements are
equivalent\emph{:}

\emph{(i)} $f$ and $g$ are convex, lsc on $D,$ and $f_{D}-g_{D}$ is Lipschitz
with constant $K.$

\emph{(ii) }For each $x\in D$%
\[
\emptyset\neq\partial_{\varepsilon}f(x)\subset\partial_{\varepsilon
}g(x)+B_{\ast}(\theta,K)\text{ \ \ for all }\varepsilon>0.
\]

\emph{(iii) }For each $x\in D$ there exists $\delta>0$ such that
\[
\emptyset\neq\partial_{\varepsilon}f(x)\subset\partial_{\varepsilon
}g(x)+B_{\ast}(\theta,K)\text{ \ \ for all }\varepsilon\in(0,\delta).
\]

\emph{(iv) }For each $x\in D$%
\[
\partial_{\varepsilon}f\left(  x\right)  \cap\left[  \partial_{\varepsilon
}g\left(  x\right)  +B_{\ast}(\theta,K)\right]  \neq\emptyset\text{ \ \ for
all }\varepsilon>0.
\]

\emph{(v) }For each $x\in D$ there exists $\delta>0$ such that%
\[
\partial_{\varepsilon}f\left(  x\right)  \cap\left[  \partial_{\varepsilon
}g\left(  x\right)  +B_{\ast}(\theta,K)\right]  \neq\emptyset\text{ \ \ for
all }\varepsilon\in(0,\delta).
\]

\emph{(vi) }For each $x\in D$%
\[
d\left(  \partial_{\varepsilon}f\left(  x\right)  ,\partial_{\varepsilon
}g\left(  x\right)  \right)  \leq K\text{ \ \ for all }\varepsilon>0.
\]

\emph{(vii) }For each $x\in D$ there exists $\delta>0$ such that%
\[
d\left(  \partial_{\varepsilon}f\left(  x\right)  ,\partial_{\varepsilon
}g\left(  x\right)  \right)  \leq K\text{ \ \ for all }\varepsilon\in
(0,\delta).
\]

\end{theorem}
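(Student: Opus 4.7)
The plan is to apply Theorem \ref{dc Lipschitz} with the continuous convex function $h:=K\left\Vert \cdot\right\Vert $, which satisfies $h(\theta)=0$. First I would verify the identification
\[
\partial_{\varepsilon}h(\theta)=B_{\ast}(\theta,K)\quad\text{for every }\varepsilon>0,
\]
which is a short direct computation: $x^{\ast}\in\partial_{\varepsilon}h(\theta)$ means $K\left\Vert y\right\Vert \geq\left\langle y,x^{\ast}\right\rangle -\varepsilon$ for all $y\in X$, and a scaling $y\mapsto ty$ with $t\rightarrow+\infty$ shows that this is equivalent to $\left\langle y,x^{\ast}\right\rangle \leq K\left\Vert y\right\Vert $ for every $y$, i.e.\ $\left\Vert x^{\ast}\right\Vert _{\ast}\leq K$.

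Once this identification is in hand, statement (i) of Theorem \ref{dc Lipschitz} reads $(f-g)(x)-(f-g)(y)\leq K\left\Vert x-y\right\Vert $ for all $x,y\in D$; by symmetry in $x,y$ this is exactly the assertion that $f_{D}-g_{D}$ is $K$-Lipschitz, which is condition (i) of the present theorem. Substituting $\partial_{\varepsilon}h(\theta)=B_{\ast}(\theta,K)$ into statements (ii)--(v) of Theorem \ref{dc Lipschitz} produces verbatim statements (ii)--(v) of the theorem at hand, so the equivalence of (i)--(v) is an immediate transcription of Theorem \ref{dc Lipschitz}.

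It remains to splice in (vi) and (vii). The implications (iv)$\Rightarrow$(vi) and (v)$\Rightarrow$(vii) are trivial: picking any $u^{\ast}\in\partial_{\varepsilon}f(x)\cap[\partial_{\varepsilon}g(x)+B_{\ast}(\theta,K)]$ and writing $u^{\ast}=v^{\ast}+w^{\ast}$ with $v^{\ast}\in\partial_{\varepsilon}g(x)$ and $\left\Vert w^{\ast}\right\Vert _{\ast}\leq K$ yields $d(\partial_{\varepsilon}f(x),\partial_{\varepsilon}g(x))\leq\left\Vert u^{\ast}-v^{\ast}\right\Vert _{\ast}\leq K$. For the converse direction, the main obstacle is that $d(\partial_{\varepsilon}f(x),\partial_{\varepsilon}g(x))\leq K$ need not be attained, so one cannot directly recover (iv). I would dispatch this by an $\eta$-inflation: given $\eta>0$, the distance bound produces $u^{\ast}\in\partial_{\varepsilon}f(x)$ and $v^{\ast}\in\partial_{\varepsilon}g(x)$ with $\left\Vert u^{\ast}-v^{\ast}\right\Vert _{\ast}<K+\eta$, hence $u^{\ast}\in\partial_{\varepsilon}f(x)\cap[\partial_{\varepsilon}g(x)+B_{\ast}(\theta,K+\eta)]$. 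Thus (vi) (resp.\ (vii)) implies condition (iv) (resp.\ (v)) with $K$ replaced by $K+\eta$, and the already-established equivalence (i)$\Leftrightarrow$(iv) (resp.\ (v)) applied with the constant $K+\eta$ shows that $f_{D}-g_{D}$ is $(K+\eta)$-Lipschitz. Letting $\eta\rightarrow 0^{+}$ gives (i) and finishes the proof. The only genuinely delicate point is this non-attainment wrinkle at the very end; the remainder is bookkeeping around Theorem \ref{dc Lipschitz}.
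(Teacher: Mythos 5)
Your proposal is correct and follows essentially the same route as the paper: specialize Theorem \ref{dc Lipschitz} to $h:=K\left\Vert \cdot\right\Vert$ after computing $\partial_{\varepsilon}(K\left\Vert \cdot\right\Vert)(\theta)=B_{\ast}(\theta,K)$, observe that (iv) $\Rightarrow$ (vi) $\Rightarrow$ (vii) is immediate, and close the loop from (vii) back to (i) by inflating $K$ to $K+\eta$ and letting $\eta\rightarrow 0^{+}$ — which is exactly the paper's $\gamma$-argument.
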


\begin{proof}
The proofs of the equivalences (i) $\Longleftrightarrow$ (ii)
$\Longleftrightarrow$ (iii) $\Longleftrightarrow$ (iv) $\Longleftrightarrow$
(v) follow from Theorem \ref{dc Lipschitz} by observing that $\partial
_{\varepsilon}(K\left\Vert \cdot\right\Vert )(\theta)=B_{\ast}(\theta,K).$ The
implications (iv) $\Longrightarrow$ (vi) $\Longrightarrow$ (vii) are obvious.
To prove (vii) $\Longrightarrow$ (i), given $x\in D$ we notice that (vii)
implies the existence of $\delta>0$ such that, for all $\gamma>0,$
\[
\partial_{\varepsilon}f\left(  x\right)  \cap\left[  \partial_{\varepsilon
}g\left(  x\right)  +B_{\ast}(\theta,K+\gamma)\right]  \neq\emptyset
\ \ \ \ \text{for all }\varepsilon\in(0,\delta).
\]
Hence, by the equivalence between (v) and (i), $f$ and $g$ are convex, lsc on
$D,$ and $f_{D}-g_{D}$ is Lipschitz with constant $K+\gamma.$ Therefore, since
$\gamma$ is arbitrary, $f_{D}-g_{D}$ is Lipschitz with constant $K.$
\end{proof}

\bigskip

Observing that statements (i), (iv), (v), (vi) and (vii) in Theorem
\ref{dc Lipschitz copy} are symmetric in $f$ and $g,$ it turns out that, under
the assumptions of this theorem, statements (ii) and (iii) are also symmetric;
therefore, if one has%
\[
\emptyset\neq\partial_{\varepsilon}f\left(  x\right)  \subset\partial
_{\varepsilon}g\left(  x\right)  +B^{\ast}\left(  \theta,K\right)  \text{
\ \ for all }\varepsilon>0
\]
for each $x\in D,$ then one also has%
\[
\emptyset\neq\partial_{\varepsilon}g\left(  x\right)  \subset\partial
_{\varepsilon}f\left(  x\right)  +B^{\ast}\left(  \theta,K\right)  \text{
\ \ for all }\varepsilon>0
\]
for each $x\in D.$ We thus obtain the following corollary:

\begin{corollary}
Let $K\geq0.$ If some (hence all) of the statements \emph{(i)--(vii)} of
Theorem \emph{\ref{dc Lipschitz copy}} holds, then for every $x\in D$ and
every $\varepsilon>0$ the Hausdorff distance between $\partial_{\varepsilon
}f\left(  x\right)  $ and $\partial_{\varepsilon}g\left(  x\right)  $ does not
exceed the constant $K.$
\end{corollary}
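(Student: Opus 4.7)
The plan is to reduce the Hausdorff bound to two reciprocal inclusions of the form appearing in statement (ii) of Theorem \ref{dc Lipschitz copy}, by exploiting the symmetry of the equivalent condition (i). Specifically, for nonempty subsets $A, B \subset X^{\ast}$, an upper bound $K$ on the Hausdorff distance between $A$ and $B$ is equivalent to the pair of inclusions $A \subset B + B_{\ast}(\theta, K)$ and $B \subset A + B_{\ast}(\theta, K)$. Taking $A := \partial_{\varepsilon} f(x)$ and $B := \partial_{\varepsilon} g(x)$, the first inclusion is exactly (ii), while the second is (ii) with the roles of $f$ and $g$ interchanged. So it will suffice to establish (ii) for both orderings $(f, g)$ and $(g, f)$.

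For this, I would observe that statement (i), namely that $f_D - g_D$ is Lipschitz on $D$ with constant $K$, is visibly symmetric in $f$ and $g$: if $f_D - g_D$ is $K$-Lipschitz then so is $g_D - f_D$. By hypothesis, one (hence all) of the statements (i)--(vii) holds; in particular (i) holds for $(f, g)$, and by the observation it also holds for $(g, f)$. Applying the equivalence (i) $\Longleftrightarrow$ (ii) of Theorem \ref{dc Lipschitz copy} to each ordering then yields both desired inclusions, with nonemptiness of $\partial_{\varepsilon} f(x)$ and $\partial_{\varepsilon} g(x)$ coming for free.

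I do not anticipate any real obstacle here: this is essentially the symmetry argument that the authors have already isolated in the paragraph preceding the corollary, packaged via the standard characterization of the Hausdorff distance as a pair of $K$-enlargement inclusions. The only mild point to keep track of is the nonemptiness of both $\varepsilon$-subdifferentials, which is built into the (ii)-formulation and ensures that the Hausdorff distance is well-defined (and finite).
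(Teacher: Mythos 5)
Your proposal is correct and follows the same route as the paper, which derives the corollary precisely from the symmetry of statement (i) (via the symmetric statements (iv)--(vii)) and the resulting pair of inclusions $\partial_{\varepsilon}f(x)\subset\partial_{\varepsilon}g(x)+B_{\ast}(\theta,K)$ and $\partial_{\varepsilon}g(x)\subset\partial_{\varepsilon}f(x)+B_{\ast}(\theta,K)$. The only (harmless) imprecision is calling the pair of $K$-enlargement inclusions \emph{equivalent} to the Hausdorff bound --- in general they are only sufficient, since the distance to a set need not be attained --- but that is the only direction your argument uses.
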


\begin{corollary}
\label{dc constant normed}The following statements are equivalent\emph{:}

\emph{(i)} $f$ and $g$ are convex, lsc on $D,$ and $f_{D}-g_{D}$ is constant.

\emph{(ii) }For each $x\in D$%
\[
d\left(  \partial_{\varepsilon}f\left(  x\right)  ,\partial_{\varepsilon
}g\left(  x\right)  \right)  =0\text{ \ \ for all }\varepsilon>0.
\]

\emph{(iii) }For each $x\in D$ there exists $\delta>0$ such that
\[
d\left(  \partial_{\varepsilon}f\left(  x\right)  ,\partial_{\varepsilon
}g\left(  x\right)  \right)  =0\text{ \ \ for all }\varepsilon\in(0,\delta).
\]

\end{corollary}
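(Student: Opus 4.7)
The plan is to deduce this corollary as the special case $K=0$ of Theorem \ref{dc Lipschitz copy}. First, I would observe that a function is Lipschitz with constant $0$ precisely when it is constant, so statement (i) here matches statement (i) of Theorem \ref{dc Lipschitz copy} with $K=0$. Next, under the convention that $d(A,B)=+\infty$ when either set is empty, the inequality $d(\partial_{\varepsilon}f(x),\partial_{\varepsilon}g(x))\le 0$ is equivalent to both subdifferentials being nonempty together with $d(\partial_{\varepsilon}f(x),\partial_{\varepsilon}g(x))=0$; hence statements (ii) and (iii) of the corollary are exactly statements (vi) and (vii) of Theorem \ref{dc Lipschitz copy} with $K=0$.

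The equivalences (i)$\Longleftrightarrow$(vi)$\Longleftrightarrow$(vii) of Theorem \ref{dc Lipschitz copy}, specialized to $K=0$, then give all three implications needed here. I would simply invoke that theorem rather than reprove anything: the nontrivial work (running the telescoping argument along a discretization of a segment) has already been carried out in the proof of Theorem \ref{dc Lipschitz}, so no new estimate is required.

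I do not expect any real obstacle; the only point worth flagging is the convention on $d$ when a set is empty, since without it statement (ii) would fail to force the $\varepsilon$-subdifferentials to be nonempty and the implication to (i) would break down. Once that convention is made explicit, the corollary is immediate.
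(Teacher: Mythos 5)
Your proposal is correct and is exactly the paper's (implicit) route: the corollary is the case $K=0$ of the equivalences (i) $\Longleftrightarrow$ (vi) $\Longleftrightarrow$ (vii) in Theorem \ref{dc Lipschitz copy}, with Lipschitz constant $0$ meaning constant and with the convention $d(A,B)=+\infty$ for empty sets turning ``$d\leq 0$'' into ``$d=0$''. Your remark about that convention being essential for (ii) $\Longrightarrow$ (i) is exactly the right point to flag.
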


From the previous result we obtain a complement to Corollary \ref{subdiff}:

\begin{corollary}
The following statements are equivalent\emph{:}

\emph{(i)} For each $x\in D$%
\[
\emptyset\neq\partial f\left(  x\right)  =\partial g\left(  x\right)  .
\]

\emph{(ii)} For each $x\in D$%
\[
d\left(  \partial f\left(  x\right)  ,\partial g\left(  x\right)  \right)
=0.
\]

\end{corollary}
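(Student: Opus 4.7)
The strategy is to derive this from Corollary \ref{dc constant normed} (applied with $K=0$ in the chain of implications already established) together with Corollary \ref{subdiff}.

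The direction (i) $\Longrightarrow$ (ii) is essentially a tautology: if $\partial f(x)=\partial g(x)$ is nonempty for every $x\in D$, then we can pick any $x^{\ast}$ in this common set and compute $\lVert x^{\ast}-x^{\ast}\rVert_{\ast}=0$, so $d(\partial f(x),\partial g(x))=0$.

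For (ii) $\Longrightarrow$ (i), I would first note that the convention $d(A,B)=+\infty$ whenever $A$ or $B$ is empty, combined with $d(\partial f(x),\partial g(x))=0$, immediately gives $\partial f(x)\neq\emptyset$ and $\partial g(x)\neq\emptyset$ for every $x\in D$. Next, I would exploit the trivial inclusions $\partial f(x)\subset\partial_{\varepsilon}f(x)$ and $\partial g(x)\subset\partial_{\varepsilon}g(x)$, valid for every $\varepsilon>0$, to obtain
\[
d\bigl(\partial_{\varepsilon}f(x),\partial_{\varepsilon}g(x)\bigr)\leq d\bigl(\partial f(x),\partial g(x)\bigr)=0
\]
for all $x\in D$ and $\varepsilon>0$. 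This is condition (ii) of Corollary \ref{dc constant normed}, so that corollary yields that $f$ and $g$ are convex and lsc on $D$, and $f_{D}-g_{D}$ is constant on $D$.

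To finish I would upgrade constancy of $f_{D}-g_{D}$ to equality of the Fenchel subdifferentials. Fix $x\in D$ and $x^{\ast}\in\partial f(x)$. For $y\in D$ the constancy of $f_{D}-g_{D}$ gives $f(y)-f(x)=g(y)-g(x)$, so $g(y)-g(x)\geq\langle y-x,x^{\ast}\rangle$; for $y\notin D$ we have $g(y)=+\infty$ and the subgradient inequality is automatic. Hence $x^{\ast}\in\partial g(x)$, and by symmetry $\partial f(x)=\partial g(x)$, completing (i). (Alternatively, once $f_{D}-g_{D}$ is known to be constant, Corollary \ref{subdiff} applies via its statement (iii).) I do not expect any real obstacle: the only subtle point is to remember that the distance convention already delivers nonemptiness of both subdifferentials, so nothing needs to be assumed separately.
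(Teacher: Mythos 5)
Your proof is correct and follows exactly the route the paper intends: (ii) implies (ii) of Corollary \ref{dc constant normed} via the inclusions $\partial f(x)\subset\partial_{\varepsilon}f(x)$, $\partial g(x)\subset\partial_{\varepsilon}g(x)$ and the emptiness convention for $d$, which yields constancy of $f_{D}-g_{D}$, and then the subgradient inequality transfers between $f$ and $g$. The paper leaves this argument implicit ("From the previous result\dots"), and your write-up supplies precisely the missing details, including the correct observation that $d=0\neq+\infty$ already forces both subdifferentials to be nonempty.
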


\end{document}